\newcommand{\Z}{\mathbb{Z}}
\newcommand{\N}{\mathbb{N}}
\newcommand{\rank}{\mathrm{rk}}
\theoremstyle{plain}
\newtheorem{lemma}{Lemma}
\newtheorem{corollary}{Corollary}
\newtheorem{theorem}{Theorem}
\title{Cantor-Bendixson ranks of countable SFTs}
\author{Ilkka T\"orm\"a \\ University of Turku, Finland \\ LIRMM, Universit\'e Montpellier, France}
\begin{document}

\maketitle

\begin{abstract}
We show that the possible Cantor-Bendixson ranks of countable SFTs are exactly the finite ordinals and ordinals of the form $\lambda + 3$, where $\lambda$ is a computable ordinal.
This result was claimed by the author in his PhD dissertation, but the proof contains an error, which is fixed in this note.
\end{abstract}

\section{Introduction}

In his PhD dissertation \cite{To15}, the author studied the structure of countable multidimensional SFTs.
In Corollary 4.16, it was claimed that the Cantor-Bendixson ranks of countable SFTs are exactly the finite ordinals and the computable ordinals of the form $\lambda + n$, where $\lambda$ is a limit orinal and $n \geq 3$.
While the claim is correct, the proof contains an error, which we fix in this note.

\section{Definitions}

For completeness, we repeat the relevant definitions from \cite{To15}.
Let $\Sigma$ be a finite alphabet.
The set $\Sigma^*$ contains all finite words over $\Sigma$, and $v \prec w$ means that $v$ is a prefix of $w$.
The \emph{$d$-dimensional full shift} over $\Sigma$ is the set $\Sigma^{\Z^d}$ equipped with the product topology.
The group $\Z^d$ acts on $\Sigma^{\Z^d}$ by \emph{shifts}: $\tau_{\vec v}(x)_{\vec w} = x_{\vec v + w}$.
A \emph{pattern} is an element $P \in \Sigma^D$, where $D \subset \Z^d$ is a finite \emph{domain}.
The \emph{cylinder} of $P$ is the set $[P] = \{ x \in \Sigma^{\Z^d} \;|\; x|_D = P \}$.
Finite unions of cylinders are exactly the clopen subsets of $\Sigma^{\Z^d}$, and form a base of the topology.
For a word $w \in \Sigma^*$, $[w] = \{ x \in \Sigma^\N \;|\; w \prec x \}$.
A \emph{subshift} is a topologically closed and shift-invariant subset of $\Sigma^{\Z^d}$.
Alternatively, a subshift is defined by a set of \emph{forbidden patterns}: for every subshift $X$, there exists a set of patterns $\mathcal{P}$ with $X = \mathcal{X}_{\mathcal{P}} = \{ x \in \Sigma^{\Z^d} \;|\; \forall P \in \mathcal{P}, \vec v \in \Z^d : \tau_{\vec v}(x) \notin [P] \}$.
If $\mathcal{P}$ is finite, then $X$ is a \emph{shift of finite type}, or SFT for short.
Intuitively, SFTs are sets of configurations defined by bounded-range constraints.
A two-dimensional subshift $X$ has the \emph{bounded signal property} if its horizontal rows are contained in a one-dimensional countable SFT.
It is \emph{deterministic} if for all $x \in X$, the coordinate $x_{\vec 0}$ is determined by $x|_H$, where $H = \{ (i, j) \in \Z^2 \;|\; j > 0 \}$ is the upper half-plane.

The \emph{Cantor-Bendixson derivative} of a topological space $X$ is the set $X' = \{ x \in X \;|\; \text{$x$ is not isolated in $X$} \}$.
By transfinite iteration, we extend this notion to all ordinals:
\begin{itemize}
\item $X^{(0)} = X$,
\item $X^{(\alpha + 1)} = (X^{(\alpha)})'$,
\item $X^{(\lambda)} = \bigcap_{\beta < \lambda} X^{(\beta)}$ for limit ordinals $\lambda$.
\end{itemize}
It is easy to see that each $X^{(\alpha)}$ is a closed subset of $X$.
Since $X$ is a set and the sequence $(X^{(\alpha)})_\alpha$ is decreasing, the derivation process eventually terminates.
The \emph{Cantor-Bendixson rank} of $X$, denoted $\rank(X)$, is the least ordinal $\alpha$ with $X^{(\alpha+1)} = X^{(\alpha)}$.
The \emph{rank} of a point $x \in X$, denoted $\rank_X(x)$, is the least ordinal $\alpha$ with $x \notin X^{(\alpha+1)}$.
If $x \in X^{(\rank(X))}$, then $x$ has no rank in $X$.

A subset $X \subset \{0, 1\}^\N$ is \emph{effectively closed}, or $\Pi^0_1$, if there exists a computable set of words $W \subset \{0, 1\}^*$ with $X = \{0, 1\}^\N \setminus \bigcup_{w \in W} [w]$.
We extend these notions to $\Sigma^{\Z^d}$ in the standard way.

We define a certain type of nondeterministic finite state machine that can be simulated in a countable SFT.
An \emph{arithmetical program} is a quadruple $(Q, \delta, q_0, q_f)$, where $Q$ is a finite state set, $\delta \subset Q \times (\{{+}, {-}, {\cdot}, {/}\} \times \N \cup \N \cup \N^2 ) \times Q$ is a finite transition relation, $q_0 \in Q$ is the initial state and $q_f \in Q$ is the final state.
An \emph{instantaneous description}, or ID, of the machine is a pair $(q, n) \in Q \times \N$, representing an internal state and the value of a counter.
A \emph{transition} of the machine form this ID to another is denoted by $(q, n) \to (p, m)$.
Depending on the transition relation, only certain transitions are possible:
\begin{itemize}
\item If $(q, ({*}, m), p) \in \delta$ for some ${*} \in \{{+}, {-}, {\cdot}, {/}\}$, $m \in \N$ and $p \in Q$, then $(q, n) \to (p, n * m)$ is a valid transition.
\item If $(q, n, p) \in \delta$ for some $p \in Q$, then $(q, n) \to (p, n)$ is a valid transition.
\item If $(q, (m, k), p) \in \delta$ for some $m, k \in \N$ and $p \in Q$, and $n \equiv m \bmod k$, then $(q, n) \to (p, n)$ is a valid transition.
\end{itemize}
The machine is initialized in $(q_0, n)$ for some initial counter value $n \in \N$, and runs nondeterministically until it reaches the final state $q_f$, at which point it \emph{accepts} its input.
We may assume it never performs an inexact division or subtracts a number greater than the counter value during such a computation.
We say $M$ is \emph{reversible} if for each ID $(p, m)$ there exists at most one ID $(q, n)$ with $(q, n) \to (p, m)$.
A reversible arithmetical program can simulate an arbitrary Minsky machine (in the sense that there is a computable bijection between the sets of their computation histories) by storing the entire computation history into an extra counter and encoding counters $n_1, n_2, \ldots, n_k$ as the single value $p_1^{n_1} p_2^{n_2} \cdots p_k^{n_k}$, where the $p_i$ are distinct primes (Lemmas 2.4 and 2.5 in \cite{To15}).
On the other hand, nondeterministic Minsky machines are able to simulate arbitrary nondeterministic Turing machines.
Neither method of simulation introduces additional nondeterminism into the simulating machine.

\section{Results}

It is known that some ordinals cannot occur as the rank of a countable SFT.

\begin{lemma}
\label{lem:UpperBound}
Let $X \subset \Sigma^{\Z^d}$ be a countable SFT.
Then $\rank(X)$ is either finite or $\lambda + 3$ for some computable ordinal $\lambda$.
\end{lemma}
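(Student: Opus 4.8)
The plan is to separate the statement into two parts: that $\rank(X)$ is finite or has the form $\lambda+n$ with $n\ge 3$, and that it is a computable ordinal. First I would record the purely topological facts. Since $X$ is a countable compact metric space it has no nonempty perfect subset, so the decreasing transfinite sequence of derivatives eventually reaches $\emptyset$; writing $\rho=\rank(X)$ this means $X^{(\rho)}=\emptyset$. Hence $\rho$ is a successor: if it were a limit, $\emptyset=X^{(\rho)}=\bigcap_{\beta<\rho}X^{(\beta)}$ would be an empty intersection of nonempty closed subsets of the compact space $X$. Moreover $X^{(\rho-1)}$ is nonempty, and since $X^{(\rho)}=(X^{(\rho-1)})'=\emptyset$ it is discrete, hence finite, and being a subshift it is a finite union of periodic orbits; in particular the maximal rank $\rho-1$ is attained only at periodic points. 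If $\rho$ is finite we are done; otherwise write $\rho=\lambda+n$ with $\lambda$ the largest limit ordinal $\le\rho$ and $n\ge 1$, and it remains to rule out $n=1$ and $n=2$ (then $n\ge 3$, and $\rho=(\lambda+(n-3))+3$ is of the required form once we know $\rho$ is computable).

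The case $n=1$ can be handled by compactness alone. Here $F:=X^{(\lambda)}=\bigcap_{\beta<\lambda}X^{(\beta)}$ is a nonempty finite union of periodic orbits, and $X^{(\beta)}\supsetneq F$ for every $\beta<\lambda$, since otherwise the derivation would stabilise below $\rho$. Fix $M$ larger than every period occurring in $F$ and larger than any radius on which two distinct points of $F$ agree, and let $U_M$ be the clopen neighbourhood of $F$ consisting of the configurations whose central $(2M+1)^d$-window occurs in $F$. Since $\bigcap_{\beta<\lambda}X^{(\beta)}=F\subseteq U_M$, compactness gives $\beta_0<\lambda$ with $X^{(\beta_0)}\subseteq U_M$; by shift-invariance \emph{every} $(2M+1)^d$-window of \emph{every} configuration of $X^{(\beta_0)}$ occurs in $F$, and the choice of $M$ forces any subshift with this property to be contained in $F$ (two overlapping such windows must come from the same periodic orbit, so periodicity propagates). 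This contradicts $X^{(\beta_0)}\supsetneq F$.

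The case $n=2$ is where the finite-type hypothesis is genuinely needed, and I expect it to be the main obstacle. Now $F:=X^{(\lambda+1)}$ is a nonempty finite union of periodic orbits, $G:=X^{(\lambda)}=\bigcap_{\beta<\lambda}X^{(\beta)}$ strictly contains $F$, and $G'=F$; the points of $G\setminus F$ are precisely the configurations of rank $\lambda$. Since $G\setminus F$ is shift-invariant and all accumulation points of any orbit it contains lie in $G'=F$, each point of $G\setminus F$ is either periodic or asymptotic to $F$, hence in every case eventually periodic (in dimension one, of the form $u^{\infty}w\,v^{\infty}$ with $u^{\infty},v^{\infty}\in F$; in general, equal to a periodic point of $F$ outside a bounded region). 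The goal is then to show that an eventually periodic configuration $g$ of a countable SFT cannot have limit rank, contradicting $\rank_X(g)=\lambda\ge\omega$. The key tool is that in an SFT one may insert a periodic block into a sufficiently long periodic region; this operation is a homeomorphism between two clopen subsets of $X$ and therefore preserves the Cantor--Bendixson ranks of points. Growing the periodic regions flanking the defect of $g$, and analysing how the finitely many periodic boundary classes in $F$ constrain the configurations that approximate $g$, should pin $\rank_X(g)$ to a successor ordinal. Making this precise is delicate, precisely because CB rank is not local --- the rank of a configuration may be witnessed by a finite pattern realised by no other configuration of the relevant derivative --- so the argument must transport rank through the period-insertion homeomorphisms rather than by copying patterns; one may alternatively invoke the structural analysis of countable SFTs in \cite{To15} here.

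Finally, for computability: after identifying $\Sigma^{\Z^d}$ with Cantor space, $X$ is a countable $\Pi^0_1$ class, and it is a standard fact that a countable $\Pi^0_1$ class has Cantor--Bendixson rank strictly below $\omega_1^{\mathrm{CK}}$ (a countable $\Pi^0_1$ class has no perfect subset, and the usual boundedness argument for $\Pi^0_1$ classes then bounds the rank by a computable ordinal). Hence $\rho$, and with it the ordinal $\lambda$ in $\rho=\lambda+n$, is computable, which together with the previous paragraphs gives the lemma.
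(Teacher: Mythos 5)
Your argument correctly disposes of two of the three excluded cases, and does so by a purely topological route that is cleaner than anything the paper spells out (the paper's own proof of this lemma is entirely by citation: computability of the rank via \cite{CeClSmSoWa86}/\cite{Kr59}, and the exclusion of $\lambda+n$, $n\le 2$, via Theorem 5.3 of \cite{BaJe13}). The successor-ness of the rank and the impossibility of $\lambda+1$ follow, as you say, from compactness plus the fact that the last nonempty derivative is a finite shift-invariant set, hence a finite union of periodic orbits with a common finite-index period lattice; your clopen-neighbourhood argument is correct and notably does not use the finite-type hypothesis at all. The computability claim is acceptable as an appeal to the standard boundedness fact for countable $\Pi^0_1$ classes, which is exactly what the paper cites.

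The genuine gap is the case $n=2$, which is the only place where the SFT hypothesis matters and which carries essentially all the content of the lemma beyond the cited folklore. First, your structural claim about $G\setminus F$ is false in dimension $d\ge 2$: a configuration equal to one periodic point of $F$ on a half-plane and to another (or to a shifted copy of the same one) on the complementary half-plane has all accumulation points of its orbit inside $F$, yet it does not agree with any single periodic point outside a bounded region. This is precisely the ``parallel periodicity vectors'' subtlety that the paper flags in Lemma 5.2 of \cite{BaJe13}, so the reduction to ``eventually periodic configurations cannot have limit rank'' does not go through as stated. Second, even for genuinely eventually periodic points, the decisive step --- transporting rank through period-insertion homeomorphisms to force a successor rank --- is only announced (``should pin'', ``delicate''), not carried out, and your fallback of invoking the structural analysis of \cite{To15} or \cite{BaJe13} at exactly this point is no longer a proof but the same citation the paper makes. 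As it stands, the proposal is an incomplete proof: a correct elementary treatment of $n\in\{0,1\}$ and of computability, with the critical $n=2$ exclusion missing.
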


\begin{proof}
That $\rank(X)$ is a computable ordinal was claimed in \cite{CeClSmSoWa86}, where the authors referred to \cite{Kr59} for a proof.
By Theorem 5.3 of the pre-print \cite{BaJe13}, the rank of a countable SFT cannot have the form $\lambda + n$, where $\lambda$ is a limit ordinal and $n \in \{0, 1, 2\}$.
Note that Lemma 5.2 of the same pre-print, which is used in the proof of the above result, is technically incorrect, but not in a way that affects the Theorem (if the set of periods $\mathcal{P}$ used in the Lemma contains parallel vectors, its claim is not necessarily true, but in the proof of the Theorem, we can guarantee that all vectors are non-parallel).
\end{proof}

We use the following result to construct well-behaved countable $\Pi^0_1$ sets with specified ranks.
By a computably enumerable point $x \in \{0, 1\}^\N$, we mean the characteristic function of a computably enumerable set.

\begin{lemma}[Corollary of Theorem 1 in \cite{ChDo93}]
\label{lem:ExistsRE}
For each computable ordinal $\lambda \neq 0$ and computably enumerable uncomputable $x \in \{0, 1\}^\N$, there exists a countable $\Pi^0_1$ set $S \subset \{0, 1\}^\N$ with $S^{(\lambda)} = \{ x \}$.
\end{lemma}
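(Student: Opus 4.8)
The plan is to derive this from [ChDo93, Theorem 1], which produces countable $\Pi^0_1$ classes whose Cantor--Bendixson derivatives collapse, at a prescribed computable level, onto a single effectively approximable point. Concretely, I would invoke the theorem to obtain a countable $\Pi^0_1$ set $P \subseteq \{0,1\}^\N$ and a point $z$ with $z \in P^{(\lambda)}$ and $\rank_P(z) = \lambda$, taking $z = x$: the only constraints the construction imposes on the chosen point are met by the characteristic function of a c.e. set, since such an $x$ is $\Delta^0_2$ and is the pointwise limit of a computable sequence of finite-support points (the characteristic functions of the finite stages of an enumeration of $A$, where $x = \chi_A$). The assumption that $x$ is uncomputable is exactly what guarantees that $x$ is a genuine limit point of every $\Pi^0_1$ class containing it, since isolated points of $\Pi^0_1$ classes are computable; without it the conclusion would fail, as $\rank_P(x)$ could collapse below $\lambda$.

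Given such a $P$, the remaining task is to upgrade ``$x \in P^{(\lambda)}$ with $\rank_P(x) = \lambda$'' to the exact equality $S^{(\lambda)} = \{x\}$. Since $\rank_P(x) = \lambda$ we have $x \notin P^{(\lambda+1)}$, so $x$ is isolated in the closed set $P^{(\lambda)}$; pick a clopen neighbourhood $C$ of $x$ with $P^{(\lambda)} \cap C = \{x\}$ and put $S = P \cap C$. Then $S$ is again a countable $\Pi^0_1$ set, and because intersecting with a clopen set commutes with the derivative one gets $S^{(\alpha)} = P^{(\alpha)} \cap C$ for all $\alpha$, hence $S^{(\lambda)} = \{x\}$; in particular $x$ is not lost, as $C$ is a neighbourhood of $x$ and every sequence in $P$ accumulating to $x$ is eventually inside $C$. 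Countability of $S$ is then automatic, since $S^{(\lambda+1)} = \emptyset$ forces the perfect kernel to be empty.

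The step I expect to be the real obstacle --- and the reason this is a corollary rather than a restatement --- is matching the indexing of [ChDo93, Theorem 1] to an arbitrary computable $\lambda$, especially the limit case. If the cited theorem is phrased only for successor (or finite) ordinals, I would bootstrap: for $\lambda = \mu+1$, attach above each node branching off $x$ at a position entering $A$ a fixed copy of the standard decidable class of rank $\mu+1$ whose $\mu$-th derivative is a single computable point, so that $x$ becomes the limit of points of rank $\mu$; for limit $\lambda$, fix a computable fundamental sequence $\lambda_0 < \lambda_1 < \cdots$ with $\sup_n \lambda_n = \lambda$ and use at the $n$-th such node a copy of the class of rank $\lambda_n + 1$, so that $x$ accumulates points of rank $\lambda_n$ for every $n$ and therefore $\rank_S(x) = \lambda$. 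The delicate part throughout is that the resulting set must still have effectively open complement although it is laid out along an uncomputable ``spine''; this is exactly the content carried by the construction in [ChDo93], and the amalgamation above only needs its uniformity in $\lambda$ (available since $\lambda$ is computable and we may fix a computable notation and fundamental sequence) together with the fact that the auxiliary rank-$(\nu+1)$ gadgets are decidable and are inserted inside pairwise disjoint clopen sets that shrink to $x$.
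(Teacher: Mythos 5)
The paper gives no proof of this lemma at all---it is quoted directly as a corollary of Theorem~1 in \cite{ChDo93}---and your derivation is the natural and correct way to bridge the gap: from a $\Pi^0_1$ class $P$ in which the given c.e.\ uncomputable $x$ has rank exactly $\lambda$, the clopen localization $S = P \cap C$ with $S^{(\alpha)} = P^{(\alpha)} \cap C$ yields $S^{(\lambda)} = \{x\}$, and countability follows from $S^{(\lambda+1)} = \emptyset$ via Cantor--Bendixson. Your contingency paragraph about bootstrapping along a fundamental sequence is unnecessary (the cited theorem applies to every nonzero computable ordinal and every nonrecursive c.e.\ set) and is the only sketchy part, but since the main line does not depend on it, the proposal is correct and consistent with the paper's citation-based treatment.
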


The rest of this note is dedicated to the proof of the following theorem.

\begin{theorem}
\label{thm:main}
For each infinite computable ordinal $\lambda$, there exists a deterministic countable SFT $X \subset \Sigma^{\Z^2}$ with the bounded signal property and $\rank(X) = \lambda + 3$.
\end{theorem}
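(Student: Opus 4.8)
The plan is to start from the $\Pi^0_1$ set $S$ provided by Lemma~\ref{lem:ExistsRE} and "lift" it into a two-dimensional countable SFT, adding exactly three to its Cantor--Bendixson rank in the process. Concretely, fix an infinite computable ordinal $\lambda$; write it as $\lambda = \mu + k$ where $\mu$ is a limit ordinal (or $0$) and $k < \omega$, but the cleanest route is to apply Lemma~\ref{lem:ExistsRE} to the ordinal $\lambda$ itself together with some fixed computably enumerable uncomputable point $x$, obtaining a countable $\Pi^0_1$ set $S \subset \{0,1\}^\N$ with $S^{(\lambda)} = \{x\}$, hence $\rank(S) = \lambda + 1$. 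The idea is then to encode $S$ vertically: a configuration of $X$ will carry, along a distinguished column, the sequence of "levels" of a path through the pruned tree of $S$, using the arithmetical-program machinery to verify on-the-fly (via a reversible arithmetical program simulating the Turing machine that enumerates the complement of $S$) that the encoded point genuinely lies in $S$. Because the verification is a semi-decision procedure, non-membership is detected only after finitely many steps, which is exactly what forces the tree of valid configurations to be an SFT rather than merely $\Pi^0_1$, and what contributes to the rank increase.

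The key construction steps, in order, are: (1) Recall the standard correspondence (Lemmas 2.4--2.5 of \cite{To15}) that turns the Turing machine recognizing $\{0,1\}^\N \setminus S$ into a reversible arithmetical program $M$, with a computable bijection on computation histories, so that determinism and the lack of spurious nondeterminism are preserved. (2) Build a deterministic two-dimensional SFT $Y$ whose configurations are space-time diagrams of $M$: the horizontal direction is the counter tape (in base-$p$ or unary-with-markers encoding), the vertical direction is time, the single counter value evolves upward, and the local rules enforce the transition relation of $M$; determinism of $X$ in the sense of the paper (reading off $x_{\vec 0}$ from the upper half-plane) follows because each ID determines its successor, while the bounded signal property holds because each row lies in a fixed one-dimensional countable SFT of "counter configurations." (3) Arrange the initial-condition and halting constraints so that a bi-infinite configuration exists precisely when the encoded input is \emph{not} in the complement-enumeration, i.e.\ lies in $S$; the slack between "halts by step $t$" and "runs forever" is where three applications of the derivative get absorbed. (4) Compute $\rank(X)$: one shows $X^{(3)}$ is (shift-orbit-closure-)isomorphic to a copy of $S$ with its finite "garbage" points of rank $0,1,2$ peeled off, so $\rank(X) = \rank(S) + 3 = (\lambda+1) + ?$ — here one must be careful, and in fact the right bookkeeping gives $\rank(X) = \lambda + 3$ after noting that $S^{(\lambda)}=\{x\}$ is a single point which becomes an isolated-in-the-derivative configuration at level $\lambda$, then takes two more derivatives (for the singleton and its shift-orbit structure, plus the trivial fixed point) to stabilize.

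The main obstacle — and the place where the PhD-dissertation proof went wrong — is precisely the rank arithmetic in step (4): naively one expects the two-dimensional simulation to contribute a fixed finite shift to the rank, but the shift amount depends delicately on how the limit-ordinal part of $\lambda$ interacts with the condensation points created by the infinitely many "finite approximation" configurations (each partial halting computation of length $t$ spawns a family of configurations limiting onto the "runs forever" configurations). The fix is to choose the encoding so that these approximation families are organized into a single convergent sequence per branch rather than an $\omega$-indexed antichain, which keeps the local contribution to the rank at exactly $2$ above the rank of $S$ minus one, yielding $\lambda+3$ for $\lambda$ infinite; verifying this requires a careful pointwise rank computation $\rank_X(y) = \rank_S(\pi(y)) + c$ for an explicit constant $c$ and an explicit projection $\pi$, together with the observation that $S^{(\lambda)}$ being a singleton (rather than empty or larger) is what pins down the "$+3$" rather than "$+2$" or "$+4$." I expect steps (1)--(3) to be routine given the cited machinery; the entire weight of the note is in getting step (4) exactly right, which is why the theorem is stated only for infinite $\lambda$ (the finite cases and the exclusion of $\lambda + n$, $n \le 2$, being handled separately by Lemma~\ref{lem:UpperBound}).
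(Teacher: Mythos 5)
Your high-level frame (take $S$ from Lemma~\ref{lem:ExistsRE}, simulate a reversible arithmetical program in a two-dimensional SFT so that the valid space-time diagrams encode elements of $S$, then do rank bookkeeping) is the same as the paper's, but the one step you defer to ``careful bookkeeping'' in (4) is precisely where the content of this note lies, and your proposed fix does not close it. If you embed all of $S$ homeomorphically into each cylinder of started computations, the count comes out to $\lambda+4$, not $\lambda+3$: after the isolated junk configurations are removed, each embedded copy of $S$ still contains the image of the top point $x$ with $\rank_S(x)=\lambda$; that image survives to $X^{(\lambda)}$, is isolated there (it is the unique surviving point in its open cylinder), and so disappears only at level $\lambda+1$. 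What then remains is the shift-invariant set of degenerate configurations that compactness forces on you — limits of genuine computation diagrams obtained by shifting off to infinity: configurations with a single head sweep, configurations showing only the border of the computation zone, and the uniform/periodic configurations. These form three layers (sweep $\to$ border $\to$ uniform), so this tail has rank $3$, and $3+(\lambda+1)$ on top gives $\lambda+4$. Your suggested remedy — reorganizing the ``finite approximation'' families into a single convergent sequence per branch so that the tail contributes only $2$ — is asserted, not argued, and it does not address these limit configurations at all, since they come from shifting valid computations, not from the halting-approximation families; your pointwise formula $\rank_X(y)=\rank_S(\pi(y))+c$ cannot rescue this either, because the problem is the top point of $S$ plus a rank-$3$ tail, not a uniform additive constant.

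The paper's actual fix, absent from your proposal, is to exclude the top-rank point from the embedded copies, and it crucially uses the hypothesis (which your proof never touches) that the point $x^{\mathrm{max}}$ with $S^{(\lambda)}=\{x^{\mathrm{max}}\}$ is computably enumerable. The simulated program takes a parameter $k$ and runs an algorithm that simultaneously enumerates $x^{\mathrm{max}}$ (guessing its bits and, for each guessed $1$, waiting for the machine $T$ witnessing enumerability to halt) and guesses a second sequence $y$, verifying along the way that $y$ avoids the forbidden prefixes of $S$ and that $y\notin[x^{\mathrm{max}}_{[0,k-1]}]$. A wrong guess of a $1$-bit of $x^{\mathrm{max}}$ as $0$ leaves the program waiting forever, so such configurations are isolated; consequently $X(k,N,\vec v)'$ is homeomorphic to $S\setminus[x^{\mathrm{max}}_{[0,k-1]}]$, whose $\lambda$-th derivative is empty. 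Hence $X^{(\lambda)}$ consists only of the degenerate configurations, and a further design choice (every other transition multiplies the counter by $2$, so an unstarted computation containing both heads and the border is impossible) pins their rank at exactly $3$, giving $\rank(X)=\lambda+3$. Without this exclusion of $x^{\mathrm{max}}$ from the embedded copies — or a genuinely new argument that the degenerate tail can be made rank $2$, which you do not supply — your construction lands at $\lambda+4$, which is exactly the error of the original dissertation that this note corrects.
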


\begin{proof}
By Lemma~\ref{lem:ExistsRE}, there is a computably enumerable uncomputable point $x^{\mathrm{max}} \in \{0, 1\}^\N$ and a countable $\Pi^0_1$ set $S \subset \{0, 1\}^\N$ with $S^{(\lambda)} = \{ x^{\mathrm{max}} \}$.
The idea is to construct an arithmetical program that runs forever and nondeterministically produces an element of $S$ one bit at a time.
Incorrect choices are recognized in finite time and cause the program to halt.
We simulate the program in a simple countable SFT $X$, and since the choices are visible in the configurations, we obtain a computable and continuous embedding of $S$ into $X$.
However, $X$ also contains `degenerate' configurations that do not encode elements of $S$, increasing its Cantor-Bendixson rank.
Some increase is inevitable by Lemma~\ref{lem:UpperBound}, and we will optimize it by encoding the highest-rank element $x^{\mathrm{max}}$ as a degenerate configuration.
Since $x^{\mathrm{max}}$ is computably enumerable, the arithmetical program is able to distinguish it from other elements of $S$.
This last step was not present in the construction in \cite{To15}, resulting in the bound $\lambda + 4$, which was incorrectly reported as $\lambda + 3$.

We proceed with the construction.
Let $w^0, w^1, \ldots \in \{0, 1\}^*$ be a computably enumerable set of forbidden prefixes for $S$.
Let $T$ be a Turing machine such that $x^{\mathrm{max}}_n = 1$ if and only if $T$ halts on $n$.
Let $M = (Q, \delta, q_0, q_f)$ be a reversible arithmetical program corresponding to Algorithm~\ref{alg:Algo}.
We may assume that every other transition of $M$ involves multiplying the counter by 2, and the counter is never divided by an even number; this is analogous to giving a counter machine one extra counter that it increments on every other step.

\begin{algorithm}[t]

\begin{algorithmic}[1]
  \State $k \gets \mathrm{input}$ \Comment{Program parameter}
  \State $\mathcal{T} \gets \emptyset$ \Comment{A set of Turing machines}
  \State $v \gets \epsilon$ \Comment{A prefix of $x^{\mathrm{max}}$}
  \State $w \gets \epsilon$ \Comment{A prefix of $y$}
  \State $j \gets 0$ \Comment{The length of $w$}
  \For{$i \in \N$} \Comment{The length of $v$}
    \State \textbf{guess}~$v_i \in \{0, 1\}$
    \If{$v_i = 1$}
      \State \textbf{verify}~that $T(i)$ halts \Comment{This step may take forever} \label{ln:HaltCheck}
      \State \textbf{guess}~$w_j \in \{0, 1\}$
      \For{$p \in \{0, \ldots, j\}$}
        \State \textbf{verify}~$w^p \not\prec w$ \label{ln:SCheck}
      \EndFor
      \If{$j = k-1$}
        \State \textbf{verify}~$w \neq v_{[0, k-1]}$ \label{ln:kCheck}
      \EndIf
      \State $j \gets j+1$
    \Else
      \State $\mathcal{T} \gets \mathcal{T} \cup \{T(i)\}$
      \State \textbf{verify} that no $T(\ell) \in \mathcal{T}$ halts in $i$ steps \label{ln:HaltCheck2}
    \EndIf
  \EndFor
\end{algorithmic}

\caption{Take a number $k \in \N$, enumerate the configuration $x^{\mathrm{max}}$ and another configuration $y \in S \setminus [x^{\mathrm{max}}_{[0, k-1]}]$.}
\label{alg:Algo}
\end{algorithm}

On a step marked with \textbf{verify}, the program $M$ checks the condition and halts if it does not hold.
The steps markes with \textbf{guess} are nondeterministic choices.
If the algorithm does not halt and always passes the check on line~\ref{ln:HaltCheck}, it enumerates a configuration $x = \lim v \in \{0, 1\}^\N$, and if $x$ has infinitely many $1$s, another configuration $y = \lim w \in \{0, 1\}^\N$.

\begin{lemma}
\label{lem:AlgoBehavior}
Let $k \in \N$ be the input of Algorithm~\ref{alg:Algo}.
If the algorithm runs forever and always passes the check on line~\ref{ln:HaltCheck}, then $\lim v = x^{\mathrm{max}}$ and $\lim w \in S \setminus [x^{\mathrm{max}}_{[0, k-1]}]$.
All such choices of $\lim w$ are possible.
\end{lemma}

\begin{proof}
Denote $x = \lim v$ and $y = \lim w$.
Suppose $x_n \neq x^{\mathrm{max}}_n$ for some $n \in \N$.
If $x^{\mathrm{max}}_n = 0$, then $T(n)$ does not halt, so the check on line~\ref{ln:HaltCheck} never finishes.
If $x^{\mathrm{max}}_n = 1$, then $T(n)$ halts after some $i$ steps, and the check on line~\ref{ln:HaltCheck2} fails.
Both possibilities contradict our assumptions, and hence $x = x^{\mathrm{max}}$.
In particular, since $x^{\mathrm{max}}$ is uncomputable, $x$ has infinitely many 1s.

For all $p \in \N$ we eventually guarantee $w^p \not\prec y$ on line~\ref{ln:SCheck}, which implies $y \in S$.
We also verify $x^{\mathrm{max}}_{[0, k-1]} \not \prec y$ on line~\ref{ln:kCheck} (note that $v$ is always at least as long as $w$), so that $y \notin [x^{\mathrm{max}}_{[0, k-1]}]$.
Since $y$ is otherwise unconstrained, the claim follows.
\end{proof}

We now construct the countable SFT $X$, following the constructions in Section 3.3 of \cite{To15} with minor modifications.
The alphabet of $X$ is $\Sigma = \{0, \mathrm{Z}, \ell, r\} \cup (P_M \times Q) \cup P'_M$, where $P_M$ and $P'_M$ are auxiliary finite state sets that depend on $M$.
Let $Y \subset \Sigma^\Z$ be the orbit closure of configurations of the form ${}^\infty 0 \ell^m (p, q) r^n p' \mathrm{Z}^\infty$, ${}^\infty 0 \ell^m \mathrm{Z}^\infty$ and ${}^\infty 0 \mathrm{Z}^\infty$, where $m, n \geq 0$, $p \in P_M$, $p' \in P'_M$ and $q \in Q$.
We can define $Y$ using forbidden words of length $2$, so that it is a one-dimensional SFT.
We require that each horizontal row of $X$ comes from $Y$.
We also require that the border of the $0$-symbols is at the same position on each row.
The area to the right of this border is the \emph{computation zone}, and the symbols $(p, q)$ and $p'$ is called the \emph{left and right heads}.

There are special elements $p_0 \in P_M$ and $p'_0 \in P'_M$ such that a configuration of $Y$ with the form ${}^\infty 0 \ell^m (p_0, q) p'_0 \mathrm{Z}^\infty$ corresponds to the ID $(q, m)$ of $M$.
The configuration ${}^\infty 0 \mathrm{Z}^\infty$ represents a computation that has not yet started; above it, we can have either another identical configuration or ${}^\infty 0 \ell \mathrm{Z}^\infty$.
For $m \geq 1$, above the configuration ${}^\infty 0 \ell^m \mathrm{Z}^\infty$ we may have either ${}^\infty 0 \ell^{m+1} \mathrm{Z}^\infty$ or ${}^\infty 0 \ell^m (p_0, q_0) p'_0 \mathrm{Z}^\infty$, where $q_0 \in Q$ is the initial state of $M$.
The number $m$ encodes the input $k$ to Algorithm~\ref{alg:Algo}.
Since the program $M$ uses powers of primes to encode multiple counters, we can denote $m = 3^k \cdot N$ with $N$ not divisible by 3, and assume that at the start of its computation, $M$ checks that $N$ is not divisible by any other prime that it uses in its computations.

The computation of $M$ proceeds upward in the SFT $X$.
Subtraction or addition of a constant number can be performed in one step by moving both heads to the left or right, respectively.
To check if the counter has an exact value $n \in \N$, the heads only need to look $n$ cells to the left.
The remaining operations require auxiliary steps.
To check the remainder of the counter value modulo a number $n$, the left head travels to the $0$-border and back, keeping track of the distance modulo $n$.
To multiply the counter value by a number, both heads start moving at constant speeds, the left head bounces off the $0$-border, and the heads meet at a new position, which is a multiple of the previous position.
Division by a constant is done analogously.
See Section 3.3 of \cite{To15} for a detailed explanation, and Figure~\ref{fig:SFT} for a visualization.
In case of an illegal operation of subtracting a number larger than the counter value or performing an inexact division, which may in principle happen during an infinite computation with no starting point, or if any of the checks of Algorithm~\ref{alg:Algo} fails, a tiling error is produced.
As the operations are local, they can be implemented by local rules in the SFT $X$.
It is not hard to see that $X$ is downward deterministic (since $M$ is reversible) and has the bounded signal property.

\begin{figure}[htp]
\begin{center}
\begin{tikzpicture}[scale=0.4]

\fill [black!35] (2,-6) rectangle (25,30);

\foreach \x in {3,...,8}{
  \pgfmathsetmacro{\y}{\x-7}
  \pgfmathsetmacro{\yp}{\y+1}
  \fill [black!15] (2,\y) rectangle (\x,\yp);
}

\foreach \lc/\rc [count=\y from 2] in
    {8/8,
     7/8,6/8,5/8,4/8,3/8,2/8,3/8,4/8,5/8,6/8,7/8,
     8/8,
     5/9,2/10,5/11,8/12,11/13,
     14/14,
     11/11,
     8/12,5/13,2/14,5/15,8/16,11/17,14/18,17/19}{
  \pgfmathsetmacro{\yp}{\y+1}
  \pgfmathsetmacro{\yh}{\y+0.5}
  \pgfmathsetmacro{\lh}{\lc+0.5}
  \pgfmathsetmacro{\rh}{\rc+1.5}
  \pgfmathsetmacro{\lm}{\lc+1}
  \pgfmathsetmacro{\rm}{\rc+2}
  
  \fill [black!15] (2,\y) rectangle (\lm,\yp);
  \fill [black!25] (\lm,\y) rectangle (\rm,\yp);
  \node at (\lh,\yh) {L};
  \node at (\rh,\yh) {R};
}

\draw (0,-6) grid (25,30);

\end{tikzpicture}
\end{center}
\caption{A configuration of the SFT $X$. The two heads are denoted by L and R (their internal states are not shown), and the four shades of gray denote 0, $\ell$, $r$ and Z. The simulated machine is initialized to $m = 6$. It then checks the congruence class of the counter modulo some number, multiplies the counter by 2, decrements it by 3, and starts another multiplication by 2.}
\label{fig:SFT}
\end{figure}

Let $T \subset \N$ be the set of numbers that are not divisible by any of the primes that $M$ uses during its computation.
Let $k \in \N$, $N \in T$ and $\vec v \in \Z^2$, and let $X(k, N, \vec v) \subset X$ be the set of configurations where the computation is initialized at $\vec v$ with input $3^k \cdot N$.
Since $X(k, N, \vec v)$ is open in $X$, for each ordinal $\alpha$ we have
\begin{equation}
\label{eq:Deriv}
X^{(\alpha)} = Z_\alpha \cup \bigcup_{k, N, \vec v} X(k, N, \vec v)^{(\alpha)},
\end{equation}
where $Z_\alpha \subset X$ contains only configurations where the computation never starts.

We claim that each derivative $X(k, N, \vec v)'$ is homeomorphic to $S \setminus [x^{\mathrm{max}}_{[0, k-1]}]$.
Suppose that $z \in X(k, N, \vec v)$ contains only correct guesses at each step of the algorithm, so that the check on line~\ref{ln:HaltCheck} always succeeds after finitely many steps.
For $n \in \N$, let $z^n \in X(k, N, \vec v)$ be the configuration where the $n$th $1$-bit of $x^{\mathrm{max}}$ is guessed incorrectly as $0$, and preceding guesses are made as in $z$.
Then $z = \lim_n z^n$, so $z \in X(k, N, \vec v)'$.
Also, since $z$ enumerates two configurations $x^{\mathrm{max}}$ and $y \in S \setminus [x^{\mathrm{max}}_{[0, k-1]}]$ by Lemma~\ref{lem:AlgoBehavior}, we may choose $z$ as the homeomorphic image of $y$.
On the other hand, if $z \in X(k, N, \vec v)$ contains an incorrect guess, then it must guess a $1$-bit of $x^{\mathrm{max}}$ as $0$, since other incorrect guesses result in tiling errors.
Then the algorithm is stuck on line~\ref{ln:HaltCheck} forever and can make no further nondeterministic choices, so $z$ is an isolated point of $X(k, N, \vec v)$.

We now claim that $\rank(X) = \lambda + 3$.
Since $\lambda$ is infinite, by~\eqref{eq:Deriv} we have $X^{(\lambda)} = Z_\lambda \cup \bigcup_{k, N, \vec v} X(k, N, \vec v)^{(\lambda)} = Z_\lambda$, where $Z_\lambda$ consists of configurations $z \in X$ where computation does not start.
If $z$ contains the left border of the computation zone, then it cannot contain both left and right heads of the program $M$: since every other step of the program $M$ involves multiplying the counter by 2, this would imply that the computation has a starting point in $z$.
Thus $z$ may contain at most one back-and-forth sweep of the left head, or the south border of the $\ell$-region but no heads, and in both cases $z$ is isolated in $Z_\lambda$.
Similarly, if $z$ contains the right head, it may contain at most one sweep of the left head, and is isolated.
Such configurations exist in $Z_\lambda$, since in every configuration of $X$ with a valid computation, the left head visits the left border of the computation zone infinitely many times.
All other configurations of $Z_\lambda$ are periodic and constitute a finite number of shift orbits, and thus we have $\rank(Z_\lambda) = 3$.
This concludes the proof.
\end{proof}

\begin{corollary}
The Cantor-Bendixson ranks of countable two-dimensional SFTs are exactly the finite ordinals and the computable ordinals $\lambda + n$, where $\lambda$ is a limit ordinal and $n \geq 3$.
\end{corollary}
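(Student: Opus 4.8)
The plan is to read the corollary off from Lemma~\ref{lem:UpperBound} and Theorem~\ref{thm:main}, adding only an elementary construction for the finite ranks and a short check that the two descriptions of the ordinal class agree. For the ``necessity'' direction, let $X$ be a countable two-dimensional SFT. By Lemma~\ref{lem:UpperBound}, $\rank(X)$ is finite or equals $\mu + 3$ for some computable ordinal $\mu$. Writing $\mu = \lambda + j$ with $\lambda$ either $0$ or a limit ordinal and $j < \omega$, we get $\rank(X) = \lambda + (j + 3)$: if $\lambda = 0$ this is the finite ordinal $j + 3$, and otherwise $\lambda$ is a limit ordinal with $j + 3 \geq 3$, and $\lambda$ is a computable ordinal since every ordinal below the computable ordinal $\mu$ is computable, so $\lambda + (j+3)$ is computable as well. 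In either case $\rank(X)$ lies in the stated class.

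For ``sufficiency'' I would realize each ordinal in the class as a rank. Given a computable limit ordinal $\lambda$ and an integer $n \geq 3$, set $\lambda' = \lambda + (n-3)$; this is a computable ordinal with $\lambda' \geq \lambda \geq \omega$, hence infinite, so Theorem~\ref{thm:main} provides a deterministic countable SFT $X \subseteq \Sigma^{\mathbb{Z}^2}$ with $\rank(X) = \lambda' + 3 = \lambda + n$. For the finite ranks, rank $0$ is attained by the empty SFT, and for $k \geq 1$ I would use the one-dimensional SFT $Y_k \subseteq \{1, \dots, k\}^{\mathbb{Z}}$ of non-decreasing bi-infinite sequences, defined by the length-two forbidden words $\{ ij : 1 \leq j < i \leq k \}$. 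Every configuration of $Y_k$ is eventually constant in both directions, so $Y_k$ is countable, and a finite induction on $m$ shows that $Y_k^{(m)}$ is exactly the set of configurations whose two tail values differ by at most $k - 1 - m$ (once a large window exposes the start of the higher tail, the only way to extend within the current derivative is to keep that tail, so such configurations are isolated there). Hence $Y_k^{(k-1)}$ is the $k$-element set of constant configurations, $Y_k^{(k)} = \emptyset$, and $\rank(Y_k) = k$. Finally I would lift $Y_k$ to two dimensions: let $X_k \subseteq \{1, \dots, k\}^{\mathbb{Z}^2}$ be the SFT whose nearest-neighbour rules forbid $\{ ij : j < i \}$ between horizontal neighbours and force vertical neighbours to be equal; the projection $x \mapsto (i \mapsto x_{(i,0)})$ is a homeomorphism $X_k \to Y_k$, so $X_k$ is a countable two-dimensional SFT with $\rank(X_k) = k$. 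Together with the previous paragraph this exhausts the stated class.

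The substance of the corollary lies entirely in Lemma~\ref{lem:UpperBound} and Theorem~\ref{thm:main}, so I do not expect a serious obstacle; the only points needing care are the ordinal bookkeeping --- chiefly the observation that ``finite, or $\mu + 3$ with $\mu$ computable'' and ``finite, or $\lambda + n$ with $\lambda$ a computable limit ordinal and $n \geq 3$'' describe the same set of ordinals --- and the routine computation that the auxiliary SFTs $Y_k$, and hence $X_k$, have Cantor--Bendixson rank exactly $k$.
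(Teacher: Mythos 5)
Your proposal is correct and follows essentially the same route as the paper: the upper bound from Lemma~\ref{lem:UpperBound}, the infinite ranks from Theorem~\ref{thm:main}, and the finite ranks from the vertically constant, horizontally nondecreasing SFTs. You merely spell out the ordinal bookkeeping and the rank computation for $Y_k$, which the paper leaves implicit, and both checks are sound.
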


\begin{proof}
Arbitrary finite ranks for countable SFTs are witnessed by the subshifts over the alphabets $\{0, \ldots, n\}$ that are constant in the vertical direction and nondecreasing in the horizontal direction.
The computable ordinals $\lambda + n$, where $\lambda$ is a limit ordinal and $n \geq 3$, are handled by Theorem~\ref{thm:main}.
Lemma~\ref{lem:UpperBound} shows that these are the only possibilities.
\end{proof}

\section*{Acknowledgments}

The author is thankful to Ville Salo for finding the error in \cite{To15}.

\bibliographystyle{plain}
\bibliography{bib}{}

\begin{thebibliography}{1}

\bibitem{BaJe13}
Alexis {Ballier} and Emmanuel {Jeandel}.
\newblock {Structuring multi-dimensional subshifts}.
\newblock {\em ArXiv e-prints}, September 2013.

\bibitem{CeClSmSoWa86}
Douglas Cenzer, Peter Clote, Rick~L. Smith, Robert~I. Soare, and Stanley~S.
  Wainer.
\newblock Members of countable {$\Pi^0_1$} classes.
\newblock {\em Ann. Pure Appl. Logic}, 31(2-3):145--163, 1986.
\newblock Special issue: second Southeast Asian logic conference (Bangkok,
  1984).

\bibitem{ChDo93}
Peter Cholak and Rod Downey.
\newblock On the {C}antor-{B}endixon rank of recursively enumerable sets.
\newblock {\em J. Symbolic Logic}, 58(2):629--640, 1993.

\bibitem{Kr59}
G.~Kreisel.
\newblock Analysis of the {C}antor-{B}endixson theorem by means of the analytic
  hierarchy.
\newblock {\em Bull. Acad. Polon. Sci. S\'er. Sci. Math. Astr. Phys.},
  7:621--626. (unbound insert), 1959.

\bibitem{To15}
Ilkka T{\"o}rm{\"a}.
\newblock {\em Structural and Computational Existence Results for
  Multidimensional Subshifts}.
\newblock {PhD} dissertation, University of Turku, 2015.

\end{thebibliography}

\end{document}